\documentclass[english,10pt]{article}
\usepackage[english]{babel}
\usepackage{amsmath,amsthm,amssymb}
\usepackage{latexsym}
\usepackage{enumerate}
\usepackage{hyperref}

\usepackage{color}

\newtheorem{theorem}{Theorem}
\newtheorem{lemma}{Lemma}

\newtheorem{corollary}{Corollary}
\theoremstyle{remark}

\title{Uniqueness and statistical properties of the Gibbs state on general one-dimensional lattice systems with markovian structure}

\author{Victor Vargas\thanks{Centre for Mathematics of the University of Porto, Porto-Portugal, vavargascu@gmail.co. Supported by FCT through the project UIDP/00144/2020.}}

\begin{document}

\maketitle

{\bf Abstract}
Let $M$ be a compact metric space and $X = M^{\mathbb{N}}$, we consider a set of admissible sequences $X_{A, I} \subset X$ determined by a continuous admissibility function $A : M \times M \to \mathbb{R}$ and a compact set $I \subset \mathbb{R}$. Given a Lipschitz continuous potential $\varphi : X_{A, I} \to \mathbb{R}$, we prove uniqueness of the Gibbs state $\mu_\varphi$ and we show that it is a Gibbs-Bowen measure and satisfies a central limit theorem.

\vspace{2mm}

{\footnotesize {\bf Keywords}: Central limit theorem, decay of correlations, Gibbs state, mixing measures.}

\vspace{2mm}

{\footnotesize {\bf Mathematics Subject Classification (2020)}: 28Dxx, 37A60, 37D35.}

\vspace{2mm}

\section{Introduction}

Typical problems, like existence of Gibbs states and their relation with the so called equilibrium states and maximizing measures, besides some problems of selection and non-selection at zero temperature on infinite spin one-dimensional lattice systems were deeply understood (see for instance \cite{MR2864625, MR3377291}). In the seminal works, the main interest of the authors was to characterize the behavior of the ``free energy'' associated to a suitable potential at lower temperatures (see \cite{zbMATH05995684, MR2316198} for details). In here, we are concerned with the uniqueness and some statistical properties of the Gibbs state associated to a normalized Lipschitz continuous potential defined on a general one-dimensional lattice system with markovian structure (see \cite{MR3194082, zbMATH07249339, zbMATH07434854} for details about the construction of the configuration space). Moreover, the techniques that we use here allow to prove our results looking only at the behavior of the dual Ruelle operator on the space of Borel probability measures.

The kind of model which we are dealing with was presented in \cite{MR2864625, MR3377291} for the context of Bernoulli systems. In that works, the authors studied some properties of the Gibbs states at lower temperatures and the relation between them and the so-called DLR Gibbs measures (better understood in the context of statistical mechanics). Some time later, in \cite{MR3194082}, a markovian approach of this model has been proposed where some of the sequences defined on the configuration space are forbidden through an admissibility function with finite memory. In \cite{zbMATH07434854}, the existence of equilibrium states through techniques of the Ruelle-Perron-Frobenius theory and some interesting connections between that class of infinite spin one dimensional lattice systems with the countable Markov shifts were presented. Moreover, the existence of accumulation points at zero temperature for a class of countable Markov shifts was showed, in a different context to the ones presented in \cite{MR3864383, MR2151222}. Besides that, in \cite{zbMATH07249339} some properties of the quadratic pressure and the cosine potential on this dynamical setting were studied.

In this work, following a similar approach to the one proposed in \cite{zbMATH06717808} (see also \cite{MR2478676} for a stochastic equations approach), we show existence of a Wasserstein metric contracting the dual Ruelle operator associated to a normalized potential which guarantees existence and uniqueness of a fixed point (the so called Gibbs state). Actually, in our approach it is not necessary to look at the spectral behavior of the Ruelle operator itself. Therefore, the results presented here are independent of the ones obtained via Ruelle-Perron-Frobenius theory in \cite{zbMATH07434854}. In addition, we show that such a fixed point is Gibbs in the sense of Bowen (in the sense of the one proposed in \cite{MR1860762}), and we prove that the Gibbs state has exponential decay of correlations which implies the mixing property. Furthermore, adapting some techniques proposed in \cite{Via97} (see also \cite{zbMATH03337280} for a seminal approach), we prove that the Gibbs state satisfies a central limit theorem.

The paper is organized as follows. In section \ref{preliminaries-section} we present the notation and the main definitions to be used throughout the paper. In section \ref{contraction-section} we prove Theorem \ref{theorem-FP} which guarantees the uniqueness of the Gibbs state. In section \ref{statistical-properties-section} we present the proofs of Theorem \ref{theorem-GB}, Theorem \ref{theorem-DC} and Theorem \ref{theorem-CL} which show interesting statistical properties of the Gibbs state.

\section{Preliminaries}
\label{preliminaries-section}

Let $(Y, d)$ be a compact metric space and $T : Y \to Y$ be a continuous endomorphism. Denote by $\mathrm{C}(Y)$, resp. $\mathrm{Lip}(Y)$, resp. $\mathcal{M}(Y) = \mathrm{C}^*(Y)$, resp. $\mathcal{M}_1(Y)$ and resp. $\mathcal{M}_T(Y)$ for the set of {\bf continuous functions}, resp. {\bf Lipschitz continuous functions}, resp. {\bf finite Borel measures}, resp. {\bf Borel probability measures} and resp. {\bf T-invariant Borel probability measures} on $Y$. As usual, we use the notation $\mathcal{B}_Y$ for the collection of {\bf Borel sets} on $Y$ and $\mathrm{Lip}(\varphi)$ for the {\bf Lipschitz constant} of $\varphi \in \mathrm{Lip}(Y)$.

Consider a pair $\mu, \eta \in \mathcal{M}_1(Y)$, we denote the {\bf transport plan} between $\mu$ and $\eta$ by $\Gamma(\mu, \eta)$ (also known as the {\bf joining} between $\mu$ and $\eta$). In here, we use the definition of {\bf Wasserstein metric} on $\mathcal{M}_1(Y)$ (w.r.t. the metric $d$) obtained via {\bf Kantorovich duality}, which is given by the expression
\begin{equation}
\label{WMKD}
W_d(\mu, \eta) = \sup\Bigl\{ \int_{X_{A, I}} \psi d(\mu - \eta) : \mathrm{Lip}(\psi) \leq 1 \text{ and } \|\psi\|_\infty \leq \mathrm{diam}(Y) \Bigr\} \;,
\end{equation}
where $\mathrm{diam}(Y) := \sup\{d(x, y) : x, y \in Y\}$ is the {\bf diameter} of $Y$ w.r.t. $d$.

Fix a compact metric space $(M, d_M)$, we denote by $X := M^{\mathbb{N}}$ the set of {\bf sequences} taking values into $M$. It is well known that $X$ also results in a compact metric space when it is equipped with the metric $d_X(x, y) := \sum_{n=1}^{\infty} \frac{d_M(x_n, y_n)}{2^n}$. Actually, we can assume w.l.o.g. $\mathrm{diam}(M) = 1$ which implies $\mathrm{diam}(X) = 1$. As usual, the {\bf shift map} is defined by the function $\sigma : X \to X$ satisfying the equation $\sigma((x_n)_{n=1}^{\infty}) := (x_{n+1})_{n=1}^{\infty}$. 

In here we consider a class of one-dimensional lattice systems with markovian structure presented in \cite{MR3194082}. Several properties of the Gibbs states in that context were studied in \cite{zbMATH07249339, zbMATH07434854}. Fixing a continuous map $A : M \times M \to \mathbb{R}$ and a compact set $I \subset \mathbb{R}$, such that, $A(M \times M) \cap I \neq \emptyset$, we say that a sequence $(x_n)_{n=1}^{\infty}$ is {\bf admissible}, when $A(x_n, x_{n+1}) \in I$ for each $n \in \mathbb{N}$. We denote by $X_{A, I} \subset X$ the set of {\bf admissible sequences}. In fact, it is not difficult to check that $X_{A, I}$ is a compact subshift of $X$ (see \cite{MR3194082, zbMATH07434854} for details about the proof of this claim). Besides that, we say that $\omega := x_1\; ...\; x_k \in M^k$ is an {\bf admissible word} when $A(x_n, x_{n+1}) \in I$ for each $n \in \{1, ... , k-1\}$.

Given $b \in M$, we define $s(b) := \{a \in M : A(a, b) \in I\}$. It is not difficult to verify that $s(b) \subset M$ is a compact set for each $b \in M$. So, taking $\mathcal{K}(M)$ as the set of {\bf compact subsets of $M$} equipped with the {\bf Hausdorff metric}, we can check that the map $b \in M \mapsto s(b) \in \mathcal{K}(M)$ is continuous (see for details \cite{MR3194082, zbMATH07434854}). 

Hereafter, we assume that the map $s$ is {\bf locally constant}. Then, for any $z \in X_{A, I}$ there is $\epsilon_z > 0$, such that, each $y \in X_{A, I}$ with $y_1 \in (z_1 - \epsilon_z, z_1 + \epsilon_z)$ satisfies $s(y_1) = s(z_1)$. Define $V_z := \{y \in X_{A, I} : y_1 \in (z_1 - \epsilon_z, z_1 + \epsilon_z)\}$. Then, $V_z$ is a non-empty and open subset of $X_{A, I}$ and $X_{A, I} \subset \bigcup_{z \in X_{A, I}} V_z$. So, by compactness, there is a finite collection $\{z^1, ... , z^{n_0}\} \subset X_{A, I}$, such that, 
\begin{equation}
\label{FC}
X_{A, I} \subset \bigcup_{i=1}^{n_0} V_{z^i} \;.
\end{equation}

Fixing a potential $\varphi \in \mathrm{C}(X_{A, I})$ and an {\bf a priori} measure $\nu \in \mathcal{M}_1(M)$, we define the {\bf Ruelle operator} associated to $\varphi$, as the map sending $\psi \in \mathrm{C}(X_{A, I})$ into $\mathcal{L}_\varphi(\psi) \in \mathrm{C}(X_{A, I})$ given by $\mathcal{L}_\varphi(\psi)(x) := \int_{s(x_1)}e^{\varphi(ax)}\psi(ax) d\nu(a)$, where $ax := (a, x_1, x_2, ... )$. Besides that, we define the {\bf dual Ruelle operator} as the one assigning to each $\mu \in \mathcal{M}(X_{A, I})$ the measure $\mathcal{L}^*_\varphi(\mu) \in \mathcal{M}(X_{A, I})$ satisfying for each $\psi \in \mathrm{C}(X_{A, I})$ the expression $\int_{X_{A, I}} \psi d(\mathcal{L}^*_\varphi(\mu)) := \int_{X_{A, I}} \mathcal{L}_\varphi(\psi) d\mu$.

We say that $\varphi \in \mathrm{C}(X_{A, I})$ is a {\bf normalized potential} when $\mathcal{L}_\varphi({\bf 1}) = {\bf 1}$. In this case the operator $\mathcal{L}^*_\varphi$ preserves the set $\mathcal{M}_1(X_{A, I})$. Moreover, since $|\mathcal{L}_\varphi(\psi)(x)| \leq \|\psi\|_\infty\|\mathcal{L}_\varphi({\bf 1})\|_\infty$ for each $x \in X_{A, I}$, we obtain that $\|\mathcal{L}_\varphi\|_{op} = 1$ which also implies that the {\bf spectral radius} of $\mathcal{L}_\varphi$ is equal to $1$. 

Given a normalized potential $\varphi$, we say that $\mu_\varphi \in \mathcal{M}_\sigma(X_{A, I})$ is a {\bf Gibbs state} for $\varphi$ when it satisfies $\mathcal{L}^*_\varphi(\mu_\varphi) = \mu_\varphi$. Actually, the existence and some properties of the Gibbs states in this setting were presented via Ruelle-Perron-Frobenius theory in \cite{zbMATH07434854} (see also \cite{zbMATH07249339} for properties of the pressure map). 

Throughout the paper we assume that $\varphi$ is a normalized potential belonging to $\mathrm{Lip}(X_{A, I})$ and $\sigma : X_{A, I} \to X_{A, I}$ is {\bf topologically mixing}, i.e., for any open set $U \subset X_{A, I}$, there is $p \in \mathbb{N}$ such that $\sigma^p(U) = X_{A, I}$.

Given $k \in \mathbb{N}$, we define $\Pi_k : X_{A, I} \to M^k$ by $\Pi_k(x) = x_1\; ...\; x_k$. Using the above, we say that $C_k \subset X_{A, I}$ is a {\bf $k$-cylindrical set} associated to $B_1 \times ... \times B_k$, with $B_j \in \mathcal{B}_M$ for each $j \in \{1, ... , k\}$, when $C_k := \Pi_k^{-1}(B_1 \times ... \times B_k)$. Besides that, we say that the $k$-cylindrical set $C_k$ is a {\bf $k$-cylinder}, when $B_j \subset M$ is an open set for any $j$.
Note that the definition of $k$-cylindrical set presented here is a particular case of the one presented in \cite{MR0133846} and agrees with the definitions of $k$-cylinders given in the contexts of finite Markov shifts (see for instance \cite{MR1085356}) and the so called XY-models (see \cite{MR2864625, MR3377291} for details).

A measure $\mu \in \mathcal{M}_1(X_{A, I})$ is {\bf Gibbs-Bowen} for the normalized potential $\varphi$, when there is a constant $C > 1$, such that, for any $m \in \mathbb{N}$, each $m$-cylinder $C_m$ and all $\widetilde{x} \in C_m$, we have
\begin{equation}
\label{GBP}
C^{-1} \leq \frac{\mu(C_m)}{e^{S_m \varphi(\widetilde{x})}} \leq C \;,
\end{equation}
where $S_m\varphi := \sum_{j=0}^{m-1} \varphi \circ \sigma^j$ is the {\bf $m$-th ergodic sum} of $\varphi$.

\section{Uniqueness of the Gibbs state}
\label{contraction-section}

Our main goal in this section is to prove Theorem \ref{theorem-FP}, which states that the dual Ruelle operator $\mathcal{L}^*_\varphi$ has a unique fixed point in $\mathcal{M}_\sigma(X_{A, I})$ when $\varphi$ is a normalized potential. Following the technique proposed in \cite{zbMATH06717808}, we define a suitable metric $D_X$ on the space of sequences $X$ which also induces the product topology, such that, the operator $\mathcal{L}^*_\varphi$ is a contraction on $\mathcal{M}_1(X_{A, I})$ when it is equipped with the Wasserstein metric $W_{D_X}$. Throughout the section we fix a finite collection $\{z^1, ... , z^{n_0}\} \subset X_{A, I}$ satisfying the expression in \eqref{FC}. 

Consider $\psi \in \mathrm{Lip}(X_{A, I})$ and $m \in \mathbb{N}$, by the Ionescu-Marinescu inequality (see proof of Theorem 2 in \cite{MR3377291} for details), it follows that any pair $x, y \in V_{z^i}$, with $i \in \{1, ... , n_0\}$, satisfy 
\begin{equation}
\label{LY}
\Bigl| \mathcal{L}^m_\varphi(\psi)(x) - \mathcal{L}^m_\varphi(\psi)(y) \Bigr| \leq \Bigl( \frac{1}{2^m}\mathrm{Lip}(\psi) + \Bigl( \sum_{k=1}^{m}\frac{1}{2^k} \Bigr)\mathrm{Lip}(e^\varphi)\|\psi\|_\infty \Bigr) d_X(x, y) \;. 
\end{equation}

Then, for any $x, y \in X_{A, I}$, we have $\bigl| \mathcal{L}^m_\varphi(\psi)(x) - \mathcal{L}^m_\varphi(\psi)(y) \bigr| \leq K_md_X(x, y)$, for the constants $K_m := \max\Bigl\{ \frac{\mathrm{diam}(X_{A, I})}{\kappa},\; \frac{1}{2^m}\mathrm{Lip}(\psi) + \Bigl( \sum_{k=1}^{m}\frac{1}{2^k} \Bigr)\mathrm{Lip}(e^\varphi)\|\psi\|_\infty \Bigr\}$ and $\kappa := \min\bigl\{ \frac{\mathrm{diam}(V_{z^i})}{2} : i = 1, ... , n_0 \bigr\}$. 

Assume $\mathrm{Lip}(e^\varphi) > 0$, for each $m \in \mathbb{N}$, we consider the values $\alpha_m := \frac{1}{2^{m+1}}$ and $c_m := \Bigl( \sum_{k=1}^{m}\frac{1}{2^k} \Bigr)\mathrm{Lip}(e^\varphi)$. So, fixing $0 < \delta < \lim_{m \to \infty}\frac{1 - \alpha_m}{2c_m}$, we define a new metric $D_X$ on the space $X$, given by
\begin{equation}
\label{BM}
D_X(x, y) := \min\{1, \delta^{-1}d_X(x, y)\} \;.
\end{equation}

Since $D_X$ also induces the product topology on $X$, the Wasserstein metric $W_{D_X}$ induces the weak* topology on the set $\mathcal{M}_1(X_{A, I})$. Hereafter, for ease of computation, we assume $\mathrm{Lip}(e^\varphi) \geq 1$ which implies $\delta < \frac{1}{2}$. 

For each $x \in X_{A, I}$ and any $m \in \mathbb{N}$, the measure $(\mathcal{L}^m_\varphi)^*(\delta_x) \in \mathcal{M}_1(X_{A, I})$ is given by the expression $(\mathcal{L}^m_\varphi)^*(\delta_x) = \int_{s(a_{m-1})} \cdots \int_{s(x_1)}\delta_{a^mx}e^{S_m\varphi(a^mx)}d\nu^m(a^m)$, with $\nu^m := \otimes_{j=1}^m \nu$ the {\bf product measure} and $a^m x := (a_m, ... , a_1, x_1,...) \in X_{A, I}$ because $a_i \in s(a_{i-1})$ for $i = 2, ... , m$ and $a_1 \in s(x_1)$.

Now we want to show that for $m \in \mathbb{N}$ large enough, the operator $(\mathcal{L}^m_\varphi)^*$ is a contraction on the set of Dirac measures on $X$ when it is equipped with the metric $W_{D_X}$. Moreover, we show that the contraction constant $\alpha \in (0, 1)$ depends exclusively on the potential $\varphi$.

\begin{lemma}
\label{lemma-CWMD}
Consider $\alpha := \max\bigl\{ 1 - \frac{e^{-\mathrm{Lip}(\varphi)}}{2}, \frac{3}{4} \bigr\}$. Then, there is $m_1 \in \mathbb{N}$, such that, any $x, y \in X_{A, I}$ and $m \geq m_1$ satisfy 
\begin{equation*}
W_{D_X}\bigl((\mathcal{L}^m_\varphi)^*(\delta_x), (\mathcal{L}^m_\varphi)^*(\delta_y)\bigr) \leq \alpha W_{D_X}(\delta_x, \delta_y) \;.
\end{equation*}
\end{lemma}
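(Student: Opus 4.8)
The plan is to bound $W_{D_X}$ between the two push-forward measures by exhibiting an explicit transport plan, using the Kantorovich--Rubinstein form dual to \eqref{WMKD}, namely $W_{D_X}(\mu,\eta)=\inf_{\gamma\in\Gamma(\mu,\eta)}\int D_X\,d\gamma$. First I would record two reductions. On the one hand, for Dirac masses $W_{D_X}(\delta_x,\delta_y)=D_X(x,y)$, so the right-hand side of the claim is just $\alpha D_X(x,y)$. On the other hand, since $s$ is locally constant and continuous on the compact space $X_{A,I}$, the map $x\mapsto s(x_1)$ takes finitely many values and its level sets are clopen, hence separated by a positive distance $\eta>0$; thus $s(x_1)\neq s(y_1)$ forces $d_X(x,y)\geq\eta$. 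Taking $\delta\leq\eta$ in \eqref{BM}, this means that whenever $D_X(x,y)<1$ we automatically have $s(x_1)=s(y_1)$, so $(\mathcal{L}^m_\varphi)^*(\delta_x)$ and $(\mathcal{L}^m_\varphi)^*(\delta_y)$ integrate prefixes $a^m$ over the \emph{same} admissible domain.

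The core construction is the coupling in the matched case $s(x_1)=s(y_1)$. Writing $w_x(a^m)=e^{S_m\varphi(a^mx)}$ and $w_y(a^m)=e^{S_m\varphi(a^my)}$, I would split the transport into a diagonal part moving $\min(w_x,w_y)\,d\nu^m$ from $a^mx$ to $a^my$ along matched prefixes, and a residual part of mass $1-\rho$, where $\rho:=\int\min(w_x,w_y)\,d\nu^m$, transported arbitrarily. Because $a^mx$ and $a^my$ share their first $m$ coordinates, $d_X(a^mx,a^my)=2^{-m}d_X(x,y)$, so $D_X(a^mx,a^my)=\min\{1,\delta^{-1}2^{-m}d_X(x,y)\}$ is the \emph{same} for every prefix, while the residual mass costs at most $\mathrm{diam}_{D_X}(X_{A,I})\leq1$. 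This yields
\begin{equation*}
W_{D_X}\bigl((\mathcal{L}^m_\varphi)^*(\delta_x),(\mathcal{L}^m_\varphi)^*(\delta_y)\bigr)\leq \rho\,D_X(a^mx,a^my)+(1-\rho)=1-\rho\bigl(1-D_X(a^mx,a^my)\bigr).
\end{equation*}

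To estimate $\rho$ I would use bounded distortion of the Birkhoff sums: since $\sigma^j(a^mx)$ and $\sigma^j(a^my)$ first disagree at coordinate $m-j+1$, one gets $|S_m\varphi(a^mx)-S_m\varphi(a^my)|\leq\mathrm{Lip}(\varphi)\,d_X(x,y)\sum_{k=1}^{m}2^{-k}\leq\mathrm{Lip}(\varphi)\,d_X(x,y)$, exactly as in \eqref{LY}, so $w_x\geq e^{-\mathrm{Lip}(\varphi)d_X(x,y)}w_y$ pointwise and, by normalization $\int w_x\,d\nu^m=1$, $\rho\geq e^{-\mathrm{Lip}(\varphi)d_X(x,y)}$. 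Now fix $m_1$ with $\delta^{-1}2^{-m_1}\leq\tfrac12$ and split into regimes. If $d_X(x,y)\geq\delta$ then $D_X(x,y)=1$, $\rho\geq e^{-\mathrm{Lip}(\varphi)}$ and $D_X(a^mx,a^my)\leq\tfrac12$, giving $1-\tfrac12 e^{-\mathrm{Lip}(\varphi)}\leq\alpha$, which equals $\alpha\,D_X(x,y)$ since $D_X(x,y)=1$. If $d_X(x,y)<\delta$ then $D_X(x,y)=\delta^{-1}d_X(x,y)$, $D_X(a^mx,a^my)=2^{-m}D_X(x,y)$ and $1-\rho\leq\mathrm{Lip}(\varphi)d_X(x,y)=\mathrm{Lip}(\varphi)\delta\,D_X(x,y)$, so the bound becomes $(2^{-m}+\mathrm{Lip}(\varphi)\delta)D_X(x,y)$, which is at most $\tfrac34 D_X(x,y)\leq\alpha D_X(x,y)$ once $m\geq m_1$ and $\delta$ is as small as fixed before the lemma.

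The step I expect to be the main obstacle is the remaining case $d_X(x,y)\geq\delta$ with $s(x_1)\neq s(y_1)$, where the two measures live over \emph{different} prefix domains and the diagonal matching above is unavailable. Here I would match the prefixes only from a fixed depth onward: using that $\sigma$ is topologically mixing (equivalently, that the induced graph on the finitely many values of $s(\cdot)$ is primitive), there is a uniform $N$ after which any two admissible continuations can be synchronised, so one couples $a^m$ and $b^m$ to agree on the outer $m-N$ coordinates and leaves only the $N$ innermost symbols unmatched. This extra mismatch sits at coordinates of index $\gtrsim m-N$ and therefore contributes only an $O(2^{-(m-N)})$ term to $d_X(a^mx,b^my)$, while the same Birkhoff-sum distortion still controls the weights; enlarging $m_1$ absorbs this term and the bound $1-\tfrac12 e^{-\mathrm{Lip}(\varphi)}\leq\alpha$ persists. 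Verifying the synchronisation, and that it can be carried out $\nu^m$-measurably and mass-consistently, is the technical heart of the argument.
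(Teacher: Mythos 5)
Most of your argument is sound and runs parallel to the paper's: the reduction to $s(x_1)=s(y_1)$ when $D_X(x,y)<1$, the min-density coupling along matched prefixes, the distortion bound $|S_m\varphi(a^mx)-S_m\varphi(a^my)|\leq\mathrm{Lip}(\varphi)\,d_X(x,y)$, and the two resulting estimates $1-\tfrac12 e^{-\mathrm{Lip}(\varphi)}$ and $(2^{-m}+\mathrm{Lip}(\varphi)\delta)D_X(x,y)$ are all correct. (Two small points: the paper gets the second estimate on the dual side via the Lasota--Yorke inequality \eqref{LY} rather than from the coupling, and its $\delta$ is calibrated against $\mathrm{Lip}(e^\varphi)$ rather than $\mathrm{Lip}(\varphi)$, so you would need to shrink $\delta$ further to guarantee $\mathrm{Lip}(\varphi)\delta\leq\tfrac12$; harmless, since $\delta$ is a free parameter.)

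The genuine gap is the last case, $D_X(x,y)=1$ with $s(x_1)\neq s(y_1)$, which you yourself flag as unverified, and the sketch you give does not close it. First, the quantitative claim that ``the bound $1-\tfrac12 e^{-\mathrm{Lip}(\varphi)}\leq\alpha$ persists'' fails: if the prefixes $a^m$ and $b^m$ are only matched on the outer $m-N$ coordinates, then for $j\geq m-N$ the points $\sigma^j(a^mx)$ and $\sigma^j(b^my)$ need not be close at all, so the Birkhoff-sum distortion picks up an additive term of order $N\,\mathrm{Lip}(\varphi)\,\mathrm{diam}(X)$ that is \emph{not} damped by the geometric factors $2^{-k}$. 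The coupled mass is then only bounded below by roughly $e^{-(N+1)\mathrm{Lip}(\varphi)}$, which gives a contraction constant $1-\tfrac12 e^{-(N+1)\mathrm{Lip}(\varphi)}$ that can exceed the specific $\alpha=\max\{1-\tfrac12 e^{-\mathrm{Lip}(\varphi)},\tfrac34\}$ fixed in the statement. Second, topological mixing of $\sigma$ is a purely topological hypothesis; to run your coupling you need a \emph{uniform positive lower bound on the $(e^{S_m\varphi}\,d\nu^m)$-mass} of the set of prefix pairs that can be synchronised on the outer block, uniformly in $x$, $y$ and $m$, and you give no argument for this. The paper avoids the synchronisation device altogether: in the case $D_X(x,y)=1$ it builds a single coupling whose diagonal part is carried by the prefixes $a^m$ admissible for \emph{both} $x$ and $y$, each weighted by the global infimum $\inf\{e^{S_m\varphi(z)}:z\in X_{A,I}\}$, and bounds the mass of this part below by $e^{-\mathrm{Lip}(\varphi)}$ directly; to complete your proof along your own lines you would either need to supply the missing measure estimate and accept a worse (but still uniform) contraction constant, or switch to a coupling of this common-prefix type.
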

\begin{proof}
- First we consider the case $D_X(x, y) < 1$. 

By \eqref{BM}, we have $D_X(x, y) = \delta^{-1}d_X(x, y)$ which implies $\mathrm{Lip}_{D_X}(\psi) = \delta\mathrm{Lip}(\psi)$ for any $\psi$ belonging to $\mathrm{Lip}(X_{A, I})$, where $\mathrm{Lip}_{D_X}(\psi)$ is the Lipschitz constant of $\psi$ w.r.t. $D_X$. By the former assumption we obtain that $d_X(x, y) < \delta < \frac{1}{2}$. In particular, it follows that $x_1 = y_1$ which also implies that $s(x_1) = s(y_1)$. Therefore, by \eqref{WMKD}, the following expression holds true
\begin{align*}
&W_{D_X}\bigl((\mathcal{L}^m_\varphi)^*(\delta_x), (\mathcal{L}^m_\varphi)^*(\delta_y)\bigr) \\
&= \sup\{|\mathcal{L}^m_\varphi(\psi)(x) - \mathcal{L}^m_\varphi(\psi)(y)| : \mathrm{Lip}(\psi) \leq \delta^{-1} \text{ and } \|\psi\|_\infty \leq 1\}\;.
\end{align*}

Besides that, since $\delta < \lim_{n \to \infty}\frac{1 - \alpha_m}{2c_m}$, there is $m_0 \in \mathbb{N}$, such that any $m \geq m_0$ satisfies $\delta < \frac{1 - \alpha_m}{2c_m}$. By the above and \eqref{LY}, following a standard procedure (see for instance \cite{MR3718407, zbMATH06717808}), we obtain that each $\psi \in \mathrm{Lip}(X_{A, I})$, with $\mathrm{Lip}(\psi) \leq \delta^{-1}$ and $\|\psi\|_\infty \leq 1$, satisfies
\begin{equation*}
|\mathcal{L}^m_\varphi(\psi)(x) - \mathcal{L}^m_\varphi(\psi)(y)|
\leq (\alpha_m \delta^{-1} + c_m)d_X(x, y)
< \frac{3}{4}\delta^{-1} d_X(x, y)
\leq \alpha D_X(x, y) \;.
\end{equation*}

Therefore, we have $W_{D_X}\bigl((\mathcal{L}^m_\varphi)^*(\delta_x), (\mathcal{L}^m_\varphi)^*(\delta_y)\bigr) < \alpha D_X(x, y)$, for $m \geq m_0$, such as we wanted to prove.

- Now we consider the case $D_X(x, y) = 1$. 

Given an admissible word $a^m := a_m\; ...\; a_1$, we use the notation $[a^m]$ for the $m$-cylindrical set associated to the Borel set $\{a_m\} \times ... \times \{a_1\}$. Besides that, for each $x \in X_{A, I}$, we define $\alpha^x(a^m) := {\bf 1}_{\sigma^m([a^m])}(x)\inf\{e^{S_m\varphi(z)} : z \in X_{A, I}\}$ and $\beta^{x}(a^m) := e^{S_m\varphi(a^m x)} - \alpha^x(a^m)$. So, we have $(\mathcal{L}^m_\varphi)^*(\delta_x) = \mu^x_m + \eta^x_m$, where 
\begin{equation*}
\mu^x_m := \int_{s(a_{m-1})} \cdots \int_{s(x_1)} \delta_{a^m x}\alpha^x(a^m)d\nu^m(a^m)
\end{equation*}
and
\begin{equation*}
\eta^x_m := \int_{s(a_{m-1})} \cdots \int_{s(x_1)} \delta_{a^m x}\beta^x(a^m)d\nu^m(a^m) \;.
\end{equation*}

Given a pair $x, y \in X_{A, I}$, define $R^{x, y}_m \in \mathcal{M}(X_{A, I} \times X_{A, I})$ by the expression
\begin{align*}
R^{x, y}_m 
:=& \int_{s(a_{m-1})} \cdots \int_{s(x_1)} {\bf 1}_{\sigma^m([a^m])}(y)\delta_{(a^m x, a^m y)}\alpha^x(a^m)d\nu^m(a^m) \\
&+ \frac{\eta^x_m \otimes \eta^y_m}{\max\{\eta^x_m(X_{A, I}), \eta^y_m(X_{A, I})\}} \;.
\end{align*}

It is not difficult to check that $R^{x, y}_m = R^{y, x}_m$. Moreover, since we are assuming that $\varphi$ is a normalized potential, it follows that $\alpha^x(a^m) \leq 1$ for each $x \in X_{A, I}$, which implies that $R^{x, y}_m(X_{A, I} \times X_{A, I}) \leq 1$. 

Besides that, any $E \in \mathcal{B}_{X_{A, I}}$ satisfies $R^{x, y}_m(E \times X_{A, I}) \leq (\mathcal{L}^m_\varphi)^*(\delta_x)(E)$ and $R^{x, y}_m(X_{A, I} \times E) \leq (\mathcal{L}^m_\varphi)^*(\delta_y)(E)$. So, there exists $Q^{x, y}_m \in \mathcal{M}(X_{A, I} \times X_{A, I})$, such that, $\Pi^{x, y}_m := R^{x, y}_m + Q^{x, y}_m \in \Gamma\bigl((\mathcal{L}^m_\varphi)^*(\delta_x), (\mathcal{L}^m_\varphi)^*(\delta_y)\bigr)$. Given $k \in \mathbb{N}$, we define the set $\Delta_k := \{(x', y') \in X_{A, I} \times X_{A, I} : x'_1 = y'_1, ... ,x'_k = y'_k \}$. Then, any $m \geq k$ satisfies
\begin{align*}
\Pi^{x, y}_m(\Delta_k) 
&\geq \int_{s(a_{m-1})} \cdots \int_{s(x_1)} \delta_{(a^m x, a^m y)}(\Delta_k){\bf 1}_{\sigma^m([a^m])}(y)\alpha^x(a^m)d\nu^m(a^m) \\
&= \inf\{e^{S_m\varphi(y) - S_m\varphi(a^m x)} : y \in X_{A, I}\} \mathcal{L}_\varphi({\bf 1})(x)
\geq e^{-\mathrm{Lip}(\varphi)} \;.
\end{align*}

In particular, taking the supremum on $\Gamma\bigl((\mathcal{L}^m_\varphi)^*(\delta_x), (\mathcal{L}^m_\varphi)^*(\delta_y)\bigr)$, we obtain that $\sup\{\Pi(\Delta_k) : \Pi \in \Gamma\bigl((\mathcal{L}^m_\varphi)^*(\delta_x), (\mathcal{L}^m_\varphi)^*(\delta_y)\bigr) \} \geq e^{-\mathrm{Lip}(\varphi)}$ for every $m \geq k$.

Then, since $d_X(x', y') \leq \frac{\delta}{2}$ implies that $D_X(x', y') = \delta^{-1}d_X(x', y') \leq \frac{1}{2}$, fixing $m_1 \geq m_0$, such that, $\frac{1}{2^{m_1}} \leq \frac{\delta}{2}$, it follows that (see for details \cite{MR3718407, zbMATH06717808}), each $m \geq m_1$ satisfies
\begin{align*}
W_{D_X}\bigl((\mathcal{L}^m_\varphi)^*(\delta_x), (\mathcal{L}^m_\varphi)^*(\delta_y)\bigr) 
&\leq \frac{1}{2}\int_{\Delta_{m_1}} d\Pi^{x, y}_m + \Bigl(1 - \int_{\Delta_{m_1}} d\Pi^{x, y}_m\Bigr) \\
&= 1 - \frac{1}{2}\Pi^{x, y}_m(\Delta_{m_1}) 
\leq \alpha = \alpha D_X(x, y) \;.
\end{align*}

Then, we have $W_{D_X}\bigl((\mathcal{L}^m_\varphi)^*(\delta_x), (\mathcal{L}^m_\varphi)^*(\delta_y)\bigr) < \alpha D_X(x, y)$, for $m \geq m_1$. By the above, our result holds true.
\end{proof}

Given $\mu, \eta \in \mathcal{M}_1(X_{A, I})$, $Q \in \Gamma(\mu, \eta)$ and $\Pi_m^{x, y} \in \Gamma\bigl((\mathcal{L}^m_\varphi)^*(\delta_x), (\mathcal{L}^m_\varphi)^*(\delta_y)\bigr)$, with $x, y \in X_{A, I}$, it follows that the measure $S \in \mathcal{M}_1(X_{A, I} \times X_{A, I})$ satisfying the expression $dS(x', y') := \int_{X_{A, I} \times X_{A, I}} d\Pi_m^{x, y}(x', y')dQ(x, y)$ belongs to $\Gamma\bigl((\mathcal{L}^m_\varphi)^*(\mu), (\mathcal{L}^m_\varphi)^*(\eta)\bigr)$. The above, because any pair $\psi_1, \psi_2 \in \mathrm{C}(X_{A, I})$ satisfy
\begin{equation*}
\int_{X_{A, I} \times X_{A, I}} (\psi_1(x) + \psi_2(y)) dS(x, y) = \int_{X_{A, I}} \psi_1 d\bigl( (\mathcal{L}^m_\varphi)^*(\mu) \bigr) + \int_{X_{A, I}} \psi_2 d\bigl( (\mathcal{L}^m_\varphi)^*(\eta) \bigr) \;.
\end{equation*}

Then, taking $Q \in \Gamma(\mu, \eta)$ as a $D_X$-optimal plan, by Lemma \ref{lemma-CWMD}, it follows that the operator $(\mathcal{L}^m_\varphi)^*$, with $m \geq m_1$, is a contraction on $\mathcal{M}_1(X_{A, I})$ for the Wasserstein metric $W_{D_X}$ (see for details \cite{MR3718407, zbMATH06717808}), i.e., there exists $\beta \in (0, 1)$, such that, any pair $\mu, \eta \in \mathcal{M}_1(X_{A, I})$ satisfy
\begin{equation}
\label{WM-contraction}
W_{D_X}\bigl((\mathcal{L}^m_\varphi)^*(\mu), (\mathcal{L}^m_\varphi)^*(\eta)\bigr) \leq \beta W_{D_X}(\mu, \eta) \;.
\end{equation}

The main result of this section guarantees uniqueness of the Gibbs state associated to $\varphi$. The statement of the result is the following one. 

\begin{theorem}
\label{theorem-FP}
There is a unique Gibbs state $\mu_\varphi \in \mathcal{M}_\sigma(X_{A, I})$ for the normalized potential $\varphi$. Furthermore, any $\mu \in \mathcal{M}_1(X_{A, I})$ satisfies $\lim_{m \to \infty}(\mathcal{L}^m_\varphi)^*(\mu) = \mu_\varphi$ in the weak* topology.  
\end{theorem}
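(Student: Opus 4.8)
The plan is to deduce the theorem from the contraction estimate \eqref{WM-contraction} by a direct application of the Banach fixed point theorem, followed by some bookkeeping to pass from the power $(\mathcal{L}^{m_1}_\varphi)^*$ to $\mathcal{L}^*_\varphi$ itself, to check $\sigma$-invariance, and to upgrade subsequential convergence to convergence along the full sequence.

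First I would record that $\bigl(\mathcal{M}_1(X_{A, I}), W_{D_X}\bigr)$ is a complete metric space: since $X_{A, I}$ is compact metric, $\mathcal{M}_1(X_{A, I})$ is weak*-compact, and because $W_{D_X}$ induces the weak* topology (as noted after \eqref{BM}), this metric space is compact, hence complete. By \eqref{WM-contraction} the map $(\mathcal{L}^{m_1}_\varphi)^*$ is a $\beta$-contraction on this complete space, so the Banach fixed point theorem furnishes a unique $\mu_\varphi \in \mathcal{M}_1(X_{A, I})$ with $(\mathcal{L}^{m_1}_\varphi)^*(\mu_\varphi) = \mu_\varphi$, and moreover $(\mathcal{L}^{nm_1}_\varphi)^*(\mu) \to \mu_\varphi$ in $W_{D_X}$ for every $\mu \in \mathcal{M}_1(X_{A, I})$.

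Next I would promote $\mu_\varphi$ to a genuine fixed point of $\mathcal{L}^*_\varphi$. Since powers of $\mathcal{L}^*_\varphi$ commute, $(\mathcal{L}^{m_1}_\varphi)^*\bigl(\mathcal{L}^*_\varphi(\mu_\varphi)\bigr) = \mathcal{L}^*_\varphi\bigl((\mathcal{L}^{m_1}_\varphi)^*(\mu_\varphi)\bigr) = \mathcal{L}^*_\varphi(\mu_\varphi)$, so $\mathcal{L}^*_\varphi(\mu_\varphi)$ is also a fixed point of the contraction $(\mathcal{L}^{m_1}_\varphi)^*$, and uniqueness forces $\mathcal{L}^*_\varphi(\mu_\varphi) = \mu_\varphi$. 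To see $\mu_\varphi \in \mathcal{M}_\sigma(X_{A, I})$, I would use that normalization gives $\mathcal{L}_\varphi(\psi \circ \sigma) = \psi \cdot \mathcal{L}_\varphi(\mathbf{1}) = \psi$ for every $\psi \in \mathrm{C}(X_{A, I})$ (because $\sigma(ax) = x$), whence $\int \psi \circ \sigma \, d\mu_\varphi = \int \mathcal{L}_\varphi(\psi \circ \sigma) \, d\mu_\varphi = \int \psi \, d\mu_\varphi$ by the fixed point identity, which is exactly $\sigma$-invariance. Uniqueness among Gibbs states is then immediate: any Gibbs state $\mu$ satisfies $\mathcal{L}^*_\varphi(\mu) = \mu$, hence $(\mathcal{L}^{m_1}_\varphi)^*(\mu) = \mu$, so $\mu = \mu_\varphi$ by uniqueness of the fixed point.

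Finally, for the convergence statement I would write an arbitrary index as $m = n m_1 + r$ with $0 \leq r < m_1$, noting $n \to \infty$ as $m \to \infty$. Using that $\mu_\varphi$ is fixed by every power of $\mathcal{L}^*_\varphi$ and iterating \eqref{WM-contraction} $n$ times,
\begin{equation*}
W_{D_X}\bigl((\mathcal{L}^m_\varphi)^*(\mu), \mu_\varphi\bigr) = W_{D_X}\bigl((\mathcal{L}^{nm_1}_\varphi)^*((\mathcal{L}^r_\varphi)^*(\mu)), (\mathcal{L}^{nm_1}_\varphi)^*(\mu_\varphi)\bigr) \leq \beta^n W_{D_X}\bigl((\mathcal{L}^r_\varphi)^*(\mu), \mu_\varphi\bigr) \leq \beta^n,
\end{equation*}
where the last bound uses that $D_X \leq 1$ makes the $W_{D_X}$-diameter of $\mathcal{M}_1(X_{A, I})$ at most $1$. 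Since $\beta^n \to 0$, this yields $\lim_{m \to \infty}(\mathcal{L}^m_\varphi)^*(\mu) = \mu_\varphi$ in $W_{D_X}$, i.e. in the weak* topology. I do not expect a serious obstacle here, as the analytic content is already carried by Lemma \ref{lemma-CWMD} and \eqref{WM-contraction}; the only points requiring care are the commutation argument transferring the fixed point from $(\mathcal{L}^{m_1}_\varphi)^*$ to $\mathcal{L}^*_\varphi$ and the remainder-term bookkeeping that converts contraction along multiples of $m_1$ into full-sequence weak* convergence.
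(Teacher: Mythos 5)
Your proposal is correct and follows essentially the same route as the paper: compactness of $\mathcal{M}_1(X_{A, I})$ under $W_{D_X}$, the contraction estimate \eqref{WM-contraction}, and the Banach fixed point theorem. You are in fact more careful than the paper on two points it leaves implicit --- passing from a fixed point of the power $(\mathcal{L}^{m_1}_\varphi)^*$ to a fixed point of $\mathcal{L}^*_\varphi$ itself via commutation, and the division-with-remainder argument upgrading convergence along multiples of $m_1$ to the full sequence --- and you verify $\sigma$-invariance directly where the paper cites a reference.
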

\begin{proof}
First note that $\mathcal{M}_1(X_{A, I})$ is a compact metric space when it is equipped with the Wasserstein metric $W_{D_X}$. So, by \eqref{WM-contraction} and the Banach contraction principle, we can guarantee existence of a unique fixed point $\mu_\varphi \in \mathcal{M}_1(X_{A, I})$ for the operator $\mathcal{L}^*_\varphi$ satisfying $\lim_{m \to \infty}(\mathcal{L}^m_\varphi)^*(\mu) = \mu_\varphi$ for any $\mu \in \mathcal{M}_1(X_{A, I})$ (where the limit is taken in the weak* topology). Besides that, since we are assuming $\mathcal{L}_\varphi({\bf 1}) = {\bf 1}$, it follows that any fixed point of $\mathcal{L}^*_\varphi$ belongs to $\mathcal{M}_\sigma(X_{A, I})$ (see \cite{MR3377291} for details), which implies $\mu_\varphi \in \mathcal{M}_\sigma(X_{A, I})$.
\end{proof}

\section{Statistical properties of the Gibbs state}
\label{statistical-properties-section}

In this section we present some results about the statistical behavior of the Gibbs state $\mu_\varphi \in \mathcal{M}_\sigma(X_{A, I})$ given by Theorem \ref{theorem-FP}. The first one of the properties that we show here is that $\mu_\varphi$ is Gibbs-Bowen. In addition, we prove that the Gibbs state satisfies exponential decay of correlations, which also implies that it is a mixing invariant probability measure and, at the end of the section, we show that $\mu_\varphi$ satisfies a central limit theorem. 

\begin{theorem}
\label{theorem-GB}
The probability measure $\mu_\varphi$ is Gibbs-Bowen. 
\end{theorem}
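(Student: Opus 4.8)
The plan is to establish the two-sided bound in \eqref{GBP} by comparing the measure of an $m$-cylinder $C_m$ with the value $e^{S_m\varphi(\widetilde{x})}$, exploiting the fixed-point property $\mathcal{L}^*_\varphi(\mu_\varphi) = \mu_\varphi$ together with the Lipschitz regularity of $\varphi$. The starting point is the identity $\mu_\varphi(C_m) = \int_{X_{A,I}} {\bf 1}_{C_m}\, d\mu_\varphi = \int_{X_{A,I}} \mathcal{L}^m_\varphi({\bf 1}_{C_m})\, d\mu_\varphi$, which follows from iterating the dual relation $\int \psi\, d(\mathcal{L}^*_\varphi \mu) = \int \mathcal{L}_\varphi(\psi)\, d\mu$ and the invariance of $\mu_\varphi$. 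First I would compute $\mathcal{L}^m_\varphi({\bf 1}_{C_m})(z)$ explicitly: unwinding the definition of the Ruelle operator gives an $m$-fold integral over $s(z_1), s(a_1), \dots$ of the weight $e^{S_m\varphi(a^m z)}$ restricted to those words $a^m = a_m\,\dots\,a_1$ for which $a^m z \in C_m$, i.e. for which the first $m$ coordinates $a_m, \dots, a_1$ lie in the prescribed Borel sets $B_1, \dots, B_m$ defining $C_m$. Crucially, the condition $a^m z \in C_m$ depends only on $a^m$ and not on the tail $z$, so $\mathcal{L}^m_\varphi({\bf 1}_{C_m})(z) = \int_{[B_1\times\cdots\times B_m]} e^{S_m\varphi(a^m z)}\, d\nu^m(a^m)$ over admissible words.

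The key estimate is the Lipschitz control on the Birkhoff sums along a cylinder. For any admissible word $a^m$ with $a^m z \in C_m$ and the fixed reference point $\widetilde{x} \in C_m$, both $a^m z$ and $\widetilde{x}$ agree in their first $m$ coordinates (up to the freedom in $\widetilde x$), so by telescoping one has the standard bound $|S_m\varphi(a^m z) - S_m\varphi(\widetilde{x})| \leq \mathrm{Lip}(\varphi)\sum_{j=0}^{m-1} d_X(\sigma^j(a^m z), \sigma^j(\widetilde{x}))$, and because the two points share their leading coordinates, each term $d_X(\sigma^j(\cdot), \sigma^j(\cdot))$ is bounded by a geometric tail $\sum_{\ell > m-j} 2^{-\ell}$, yielding a total bounded by a constant $D := \mathrm{Lip}(\varphi)\sum_{j=0}^{\infty} 2^{-(j+1)} \cdot$ (a convergent factor) independent of $m$. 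This gives $e^{-D} e^{S_m\varphi(\widetilde{x})} \leq e^{S_m\varphi(a^m z)} \leq e^{D} e^{S_m\varphi(\widetilde{x})}$ uniformly. Integrating over the admissible word set and using normalization $\mathcal{L}^m_\varphi({\bf 1}) = {\bf 1}$, I would obtain $e^{-D} e^{S_m\varphi(\widetilde{x})} \nu^m([B_1\times\cdots\times B_m]\cap\{\text{admissible}\}) \leq \mathcal{L}^m_\varphi({\bf 1}_{C_m})(z) \leq e^{D} e^{S_m\varphi(\widetilde{x})} \cdot(\text{same}).$

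To convert these pointwise bounds on $\mathcal{L}^m_\varphi({\bf 1}_{C_m})$ into bounds on $\mu_\varphi(C_m)$, I would integrate against $\mu_\varphi$, a probability measure, so the pointwise inequalities pass directly to $\mu_\varphi(C_m) = \int \mathcal{L}^m_\varphi({\bf 1}_{C_m})\, d\mu_\varphi$, giving $e^{-D} e^{S_m\varphi(\widetilde x)} P \leq \mu_\varphi(C_m) \leq e^{D} e^{S_m\varphi(\widetilde x)} P$ where $P$ denotes the relevant $\nu^m$-mass of the admissible word block. The remaining point is to control $P$ away from $0$ and $\infty$ by a constant independent of $m$; the upper bound $P \le 1$ is immediate, and the difficulty is the lower bound, which is where topological mixing of $\sigma$ and the local constancy of $s$ must enter to ensure that the admissible continuations have uniformly positive $\nu$-measure. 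I expect this lower bound on the cylinder mass to be the main obstacle, and I would handle it by invoking the mixing hypothesis together with compactness to produce a uniform positive constant, after which setting $C := e^{D}\cdot\max\{1, P_{\min}^{-1}\}$ (with $C>1$) completes the proof of \eqref{GBP}.
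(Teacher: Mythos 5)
Your upper bound runs along essentially the same lines as the paper's: both arguments combine the fixed-point identity $\mu_\varphi=(\mathcal{L}^m_\varphi)^*(\mu_\varphi)$ (which, for the indicator of a cylinder, the paper justifies by sandwiching ${\bf 1}_{C_{m+1}}\leq\phi_m\leq{\bf 1}_{C_m}$ with a continuous $\phi_m$ --- a step you should include, since ${\bf 1}_{C_m}$ is not continuous and the duality is only defined against $\mathrm{C}(X_{A, I})$), the bounded-distortion estimate $|S_m\varphi(a^mz)-S_m\varphi(\widetilde{x})|\leq D$ coming from $\mathrm{Lip}(\varphi)$, and the normalization $\mathcal{L}^m_\varphi({\bf 1})={\bf 1}$. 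One caveat affecting both your write-up and the paper's: that distortion estimate needs the two points to be close in each of their first $m$ coordinates, whereas an $m$-cylinder here is built from arbitrary open sets $B_j\subset M$, so two points of $C_m$ need not satisfy this unless the diameters of the $B_j$ are controlled.

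The genuine gap is in your lower bound, exactly where you flag ``the main obstacle''. After the distortion step you have reduced matters to showing that $P:=\nu^m\bigl(\{a^m \text{ admissible}: a_{m-j+1}\in B_j,\ a_1\in s(z_1)\}\bigr)$ admits a positive lower bound independent of $m$ and of the cylinder. No such bound exists: already for the full shift with a non-atomic a priori measure $\nu$, taking each $B_j$ to be an open set of $\nu$-measure $\epsilon$ gives $P\leq\epsilon^m$, which is arbitrarily small for fixed $m$ and tends to $0$ as $m\to\infty$. Topological mixing cannot repair this, since it is a purely topological hypothesis carrying no information about $\nu$-masses of open sets, and compactness yields finite subcovers, not uniform lower bounds over all cylinders; so the route ``bound the $\nu^m$-mass of the word block from below'' cannot be completed as formulated. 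The paper's lower bound proceeds differently: it never estimates $\nu^m(B_1\times\cdots\times B_m)$, but instead uses mixing to produce, for each $x\in X_{A, I}$ and some $p=p(m)$, admissible length-$(m+p)$ preimages of $x$ landing in $C_{m+1}$, paying the price $I^{p}$ with $I=\inf\{e^{\varphi(x)}: x\in X_{A,I}\}>0$ for the $p$ extra steps, and arriving at $\mu_\varphi(C_m)\geq I^{p}e^{S_m\varphi(\widetilde{x})-\mathrm{Lip}(\varphi)}$. (Even there the uniformity of the final constant is delicate, since $p$ depends on $m$, but that is the mechanism you would need to adopt.) As written, your proposal establishes the upper half of \eqref{GBP} and leaves the lower half open.
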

\begin{proof}
Given $m \in \mathbb{N}$, let $C_m$ be a $m$-cylinder and $C_{m+1}$ be a $(m+1)$-cylinder. Observe that $|S_m\varphi(x) - S_m\varphi(y)| \leq \Bigl(\sum_{j=1}^m \frac{1}{2^j}\Bigr)\mathrm{Lip}(\varphi)d_X(x, y) \leq \mathrm{Lip}(\varphi)$ for any pair $x, y \in X_{A, I}$.

By compactness, there is $\phi_m \in \mathrm{C}(X_{A, I})$ satisfying ${\bf 1}_{C_{m+1}} \leq \phi_m \leq {\bf 1}_{C_m}$. Therefore, since $(\mathcal{L}^{m+1}_\varphi)^*(\mu_\varphi) = \mu_\varphi$, by a standard argument, it follows that $\mu_\varphi(C_{m+1}) \leq e^{S_{m+1}\varphi(\widetilde{x}) + \mathrm{Lip}(\varphi)}$ for any $\widetilde{x} \in C_m$.

On the other hand, given $x \in X_{A, I}$ and an admissible word $a^{m+p}$ such that $a^{m+p}x \in X_{A, I}$, we have ${\bf 1}_{\sigma^{m+p}(C_{m+1})}(x) = 0$ when ${\bf 1}_{C_{m+1}}(a^{m+p} x) = 0$. By the above, we can assure that ${\bf 1}_{C_{m+1}}(a^{m+p} x) \geq {\bf 1}_{\sigma^{m+p}(C_{m+1})}(x)$ for each $x \in X_{A, I}$. Denote by $I := \inf\{e^{\varphi(x)} : x \in X_{A, I}\} > 0$, since $X_{A, I}$ is topologically mixing, there is $p = p(m) \in \mathbb{N}$ such that $\sigma^{m+p}({\bf 1}_{C_{m+1}}) = X_{A, I}$. Besides that, $(\mathcal{L}^{m+p}_\varphi)^*(\mu_\varphi) = \mu_\varphi$, which implies that $\mu_\varphi(C_m) \geq I^p e^{S_m\varphi(\widetilde{x}) - \mathrm{Lip}(\varphi)}$ for any $\widetilde{x} \in C_{m+1}$. 

So, taking $C := \max\{e^{\mathrm{Lip}(\varphi)}, e^{\mathrm{Lip}(\varphi)}I^{-p}\}$ we obtain that the expression in \eqref{GBP} holds true for $\mu_\varphi$, such as we wanted to prove.
\end{proof}

Our next goal is to prove that the Gibbs state $\mu_\varphi$ has exponential decay of correlations between any pair of potentials $\phi \circ \sigma^m, \psi \in \mathrm{Lip}(X_{A, I})$, with $m \in \mathbb{N}$.

\begin{theorem}
\label{theorem-DC}
There exists $\Lambda \in (0, 1)$, such that, for any pair $\phi, \psi \in \mathrm{Lip}(X_{A, I})$ and each $m \in \mathbb{N}$ we have 
\begin{equation*}
\Bigl| \int_{X_{A, I}} (\phi \circ \sigma^m)\psi d\mu_\varphi - \int_{X_{A, I}} \phi d\mu_\varphi \int_{X_{A, I}} \psi d\mu_\varphi \Bigl| \leq C_{\phi, \psi}\Lambda^m \;.
\end{equation*}
\end{theorem}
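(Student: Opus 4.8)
The plan is to reduce the correlation to the speed at which the iterates $\mathcal{L}^m_\varphi(\psi)$ approach the constant $\int_{X_{A, I}} \psi\, d\mu_\varphi$, and to extract that speed from the Wasserstein contraction \eqref{WM-contraction}. The first ingredient I would establish is the commutation identity $\mathcal{L}^m_\varphi\bigl((\phi \circ \sigma^m)\psi\bigr) = \phi \cdot \mathcal{L}^m_\varphi(\psi)$. This follows by induction from a one-step computation: since $\sigma(ax) = x$ we have $(\phi\circ\sigma^m)(ax) = (\phi\circ\sigma^{m-1})(x)$, so the factor $\phi\circ\sigma^{m-1}$ leaves the integral defining $\mathcal{L}_\varphi$, and iterating $m$ times lowers the power of $\sigma$ to zero. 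Because $\varphi$ is normalized and $(\mathcal{L}^m_\varphi)^*(\mu_\varphi) = \mu_\varphi$, we have $\int_{X_{A,I}} g\, d\mu_\varphi = \int_{X_{A,I}} \mathcal{L}^m_\varphi(g)\, d\mu_\varphi$ for every $g \in \mathrm{C}(X_{A,I})$; applying this with $g = (\phi\circ\sigma^m)\psi$ and with $g = \psi$, and writing $c := \int_{X_{A,I}} \psi\, d\mu_\varphi$, the correlation rewrites as $\int_{X_{A,I}} \phi\,\bigl(\mathcal{L}^m_\varphi(\psi) - c\bigr)\, d\mu_\varphi$, whose absolute value is at most $\|\phi\|_\infty\,\|\mathcal{L}^m_\varphi(\psi) - c\|_\infty$.

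Next I would control $\|\mathcal{L}^m_\varphi(\psi) - c\|_\infty$ by the oscillation of $\mathcal{L}^m_\varphi(\psi)$: since $c$ is the $\mu_\varphi$-average of $\mathcal{L}^m_\varphi(\psi)$, it lies between the infimum and supremum of this function, whence $\|\mathcal{L}^m_\varphi(\psi) - c\|_\infty \leq \sup_{x,y}|\mathcal{L}^m_\varphi(\psi)(x) - \mathcal{L}^m_\varphi(\psi)(y)|$. To estimate the oscillation I use that $\mathcal{L}^m_\varphi(\psi)(x) = \int_{X_{A,I}} \psi\, d\bigl((\mathcal{L}^m_\varphi)^*(\delta_x)\bigr)$ by the definition of the dual operator, so that the difference $\mathcal{L}^m_\varphi(\psi)(x) - \mathcal{L}^m_\varphi(\psi)(y)$ equals the integral of $\psi$ against $(\mathcal{L}^m_\varphi)^*(\delta_x) - (\mathcal{L}^m_\varphi)^*(\delta_y)$. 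Invoking Kantorovich duality \eqref{WMKD}, after centering $\psi$ by a constant, which does not change the integral against a difference of probability measures and lets the sup-norm constraint be met, this is bounded by $\mathrm{Lip}(\psi)\, W_{D_X}\bigl((\mathcal{L}^m_\varphi)^*(\delta_x), (\mathcal{L}^m_\varphi)^*(\delta_y)\bigr)$, using $\mathrm{Lip}_{D_X}(\psi) \leq \mathrm{Lip}(\psi)$, which holds because $\delta < \frac12$ and $\mathrm{diam}(X_{A,I}) \le 1$.

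It then remains to iterate \eqref{WM-contraction}. Writing $m = k m_1 + r$ with $0 \leq r < m_1$ and using $(\mathcal{L}^m_\varphi)^* = \bigl((\mathcal{L}^{m_1}_\varphi)^*\bigr)^k \circ (\mathcal{L}^r_\varphi)^*$, I apply the contraction $k$ times starting from the pair $(\mathcal{L}^r_\varphi)^*(\delta_x), (\mathcal{L}^r_\varphi)^*(\delta_y)$, whose $W_{D_X}$-distance is at most $\mathrm{diam}(X_{A,I}) \leq 1$; this yields $W_{D_X}\bigl((\mathcal{L}^m_\varphi)^*(\delta_x), (\mathcal{L}^m_\varphi)^*(\delta_y)\bigr) \leq \beta^{\lfloor m/m_1\rfloor}$. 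Combining the three steps gives $\bigl|\int (\phi\circ\sigma^m)\psi\, d\mu_\varphi - \int\phi\, d\mu_\varphi\int\psi\, d\mu_\varphi\bigr| \leq \|\phi\|_\infty\,\mathrm{Lip}(\psi)\,\beta^{\lfloor m/m_1\rfloor}$. Setting $\Lambda := \beta^{1/m_1} \in (0,1)$ and $C_{\phi,\psi} := \beta^{-1}\|\phi\|_\infty\,\mathrm{Lip}(\psi)$, and using $\lfloor m/m_1\rfloor \geq m/m_1 - 1$, yields the claimed bound $C_{\phi,\psi}\Lambda^m$.

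The step I expect to be the main obstacle is the passage from the measure-level contraction \eqref{WM-contraction} to the uniform bound on the functions $\mathcal{L}^m_\varphi(\psi)$. This requires applying Kantorovich duality \eqref{WMKD} correctly in spite of its two-sided constraint (simultaneously $\mathrm{Lip}(\psi) \leq 1$ and $\|\psi\|_\infty \leq \mathrm{diam}$), which is precisely where one needs the centering of the test function together with the normalization $\delta < \frac12$ fixed below \eqref{BM}; the treatment of the residue $r$ when $m$ is not a multiple of $m_1$ is a minor but necessary bookkeeping point handled by the diameter bound on $W_{D_X}$.
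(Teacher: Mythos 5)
Your proof is correct, but it follows a genuinely different route from the paper's. The paper also begins by moving the correlation onto the transfer operator via $\int (\phi\circ\sigma^m)\widetilde{\psi}\,d\mu_\varphi = \int \phi\,\mathcal{L}^m_\varphi(\widetilde{\psi})\,d\mu_\varphi$, but from there it works with the splitting $\mathrm{Lip}(X_{A,I}) = \mathrm{span}\{{\bf 1}\}\oplus V$, where $V$ is the kernel of integration against $\mu_\varphi$, takes $\Lambda := \|\mathcal{L}_\varphi|_V\|_{op}$, and justifies $\Lambda<1$ by appealing to the uniform convergence $\mathcal{L}^m_\varphi(\widetilde{\psi})\to 0$ proved in the cited reference \cite{zbMATH07434854}; the resulting constant is $\|\phi\|_1\|\widetilde{\psi}\|_\infty$. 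You instead bound $\|\mathcal{L}^m_\varphi(\psi)-c\|_\infty$ by the oscillation of $\mathcal{L}^m_\varphi(\psi)$, identify that oscillation with $\sup_{x,y}\int\psi\,d\bigl((\mathcal{L}^m_\varphi)^*(\delta_x)-(\mathcal{L}^m_\varphi)^*(\delta_y)\bigr)$, and control it through Kantorovich duality and the contraction \eqref{WM-contraction} already established in Section \ref{contraction-section}. What your approach buys is self-containedness and an explicit rate $\Lambda=\beta^{1/m_1}$ derived entirely from Lemma \ref{lemma-CWMD}, with no reliance on the external spectral statement (whose reduction to $\|\mathcal{L}_\varphi|_V\|_{op}<1$ the paper leaves implicit); what it costs is a slightly worse constant, $\|\phi\|_\infty\,\mathrm{Lip}(\psi)$ in place of $\|\phi\|_1\|\widetilde{\psi}\|_\infty$, so your bound genuinely uses the Lipschitz regularity of $\psi$ rather than only its sup norm --- harmless here since the theorem assumes $\psi\in\mathrm{Lip}(X_{A,I})$. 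All the delicate points you flag (centering the test function to satisfy the two-sided constraint in \eqref{WMKD}, the inequality $\mathrm{Lip}_{D_X}(\psi)\leq\mathrm{Lip}(\psi)$ coming from $\delta<\tfrac12$ and $\mathrm{diam}(X)=1$, and the Euclidean-division bookkeeping with the diameter bound on the remainder) are handled correctly.
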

\begin{proof}
Consider $V := \bigl\{\widetilde{\psi} \in \mathrm{Lip}(X_{A, I}) : \int_{X_{A, I}} \widetilde{\psi} d\mu_\varphi = 0 \bigr\}$. Since $\mathcal{L}_\varphi({\bf 1}) = {\bf 1}$, it is not difficult to check that $\mathrm{Lip}(X_{A, I}) = \mathrm{span}\{{\bf 1}\} \oplus V$ and $\mathcal{L}_\varphi(V) \subset V$. Besides that, we know that the sequence $(\mathcal{L}^m_\varphi(\widetilde{\psi}))_{m=1}^\infty$ converges to ${\bf 0}$ uniformly in the norm $\| \cdot \|_\infty$ for any $\widetilde{\psi} \in V$  (see the proof of Theorem 1 in \cite{zbMATH07434854} for details). By the above, it follows that any pair $\phi, \psi \in \mathrm{Lip}(X_{A, I})$ satisfy
 \begin{equation*}
\Bigl| \int_{X_{A, I}}(\phi \circ \sigma^m)\widetilde{\psi}d\mu_\varphi \Bigr| = \Bigl| \int_{X_{A, I}} \phi \mathcal{L}^m_\varphi(\widetilde{\psi})d\mu_\varphi \Bigr| \leq \| \mathcal{L}_\varphi|_V \|^m_{op} \| \phi \|_1 \|\widetilde{\psi}\|_\infty \;,
\end{equation*}
where $\widetilde{\psi} = \psi - \int_{X_{A, I}} \psi d\mu_\varphi$. Since $\mu_\varphi \in \mathcal{M}_\sigma(X_{A, I})$, taking $\Lambda := \| \mathcal{L}_\varphi|_V \|_{op}$ and $C_{\phi, \psi} = \| \phi \|_1 \|\widetilde{\psi}\|_\infty$, our result holds true.
\end{proof}

\begin{corollary}
Any pair of functions $\phi, \psi \in \mathrm{Lip}(X_{A, I})$ satisfy
\begin{equation*}
\lim_{m \to \infty} \int_{X_{A, I}} (\phi \circ \sigma^m)\psi d\mu_\varphi = \int_{X_{A, I}} \phi d\mu_\varphi \int_{X_{A, I}} \psi d\mu_\varphi \;.
\end{equation*}
\end{corollary}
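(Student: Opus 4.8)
The plan is to read this off directly from the immediately preceding Theorem \ref{theorem-DC}, since the corollary is nothing more than the qualitative mixing statement that the quantitative exponential decay of correlations already contains. First I would fix an arbitrary pair $\phi, \psi \in \mathrm{Lip}(X_{A, I})$ and apply Theorem \ref{theorem-DC} to obtain, for every $m \in \mathbb{N}$, the bound
\begin{equation*}
\Bigl| \int_{X_{A, I}} (\phi \circ \sigma^m)\psi\, d\mu_\varphi - \int_{X_{A, I}} \phi\, d\mu_\varphi \int_{X_{A, I}} \psi\, d\mu_\varphi \Bigr| \leq C_{\phi, \psi}\Lambda^m \;,
\end{equation*}
with $\Lambda \in (0, 1)$ and $C_{\phi, \psi} = \| \phi \|_1 \|\widetilde{\psi}\|_\infty$.

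The key observation, and really the only point that needs to be checked, is that the constant $C_{\phi, \psi}$ is independent of $m$: it is built from $\|\phi\|_1$ and $\|\widetilde{\psi}\|_\infty$ (where $\widetilde{\psi} = \psi - \int_{X_{A, I}} \psi\, d\mu_\varphi$), neither of which involves $m$. Consequently the right-hand side is a fixed constant times $\Lambda^m$, and since $\Lambda \in (0, 1)$ we have $\Lambda^m \to 0$ as $m \to \infty$. Taking the limit in the displayed inequality forces the quantity inside the absolute value to tend to zero, which is exactly the asserted identity.

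There is no genuine obstacle here, as the corollary is strictly weaker than the theorem it follows from; the entire content is the passage from a geometric-rate bound to the vanishing of the correlation in the limit. If one wished to make the proof self-contained rather than a one-line appeal to Theorem \ref{theorem-DC}, the only care needed would be to confirm that $\|\phi\|_1 \leq \|\phi\|_\infty < \infty$ and $\|\widetilde{\psi}\|_\infty < \infty$ for Lipschitz $\phi, \psi$ on the compact space $X_{A, I}$, so that $C_{\phi, \psi}$ is finite, after which the convergence $\Lambda^m \to 0$ closes the argument.
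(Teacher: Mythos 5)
Your proof is correct and is exactly the argument the paper intends: the corollary is stated without proof as an immediate consequence of Theorem \ref{theorem-DC}, obtained by letting $m \to \infty$ in the exponential bound with the $m$-independent constant $C_{\phi,\psi}$. Nothing further is needed.
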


The last part of this paper is dedicated to prove a central limit theorem for the Gibbs state $\mu_\varphi \in \mathcal{M}_\sigma(X_{A, I})$. In order to get that, we consider the space $L^2(\mu_\varphi)$, the {\bf Koopman operator} $\mathcal{U} : L^2(\mu_\varphi) \to L^2(\mu_\varphi)$ given by the expression $\mathcal{U}(\psi) := \psi \circ \sigma$ and the {\bf adjoint} $\mathcal{U}^*$ of the Koopman operator as the one satisfying $\int_{X_{A, I}} \psi_1 \mathcal{U}(\psi_2) d\mu_\varphi = \int_{X_{A, I}} \mathcal{U}^*(\psi_1)\psi_2 d\mu_\varphi$, for each pair of functions $\psi_1, \psi_2 \in L^2(\mu_\varphi)$. 

We define the {\bf expected value} of $\psi \in L^2(\mu_\varphi)$ by $\mathbb{E}(\psi) := \int_{X_{A, I}}\psi d\mu_\varphi$. Given an {\bf algebra} $\mathcal{A} \subset \mathcal{B}_{X_{A, I}}$, we let $L^2(\mathcal{A})$ be the set of {\bf $\mathcal{A}$-measurable} functions in $L^2(\mu_\varphi)$ and, for any potential $\psi \in L^2(\mu_\varphi)$, we define the {\bf conditional expectation} of $\psi$ w.r.t. the algebra $\mathcal{A}$, as the map $\mathbb{E}(\psi | \mathcal{A}) : X_{A, I} \to \mathbb{R}$, such that, each $A \in \mathcal{A}$ satisfies $\int_A \mathbb{E}(\psi | \mathcal{A}) d\mu_\varphi = \int_A \psi d\mu_\varphi$.

It is not difficult to check that $\mathbb{E}(\mathbb{E}(\psi | \mathcal{A}) | \mathcal{A}) = \mathbb{E}(\psi | \mathcal{A})$. Therefore, $\mathbb{E}(\psi | \mathcal{A})$ is the orthogonal projection of $\psi$ into $L^2(\mathcal{A})$. Given $\psi \in L^2(\mu_\varphi)$, consider $\widetilde{\psi} := \psi - \mathbb{E}(\psi)$, we define the {\bf variance} of $\psi$ by $S^2(\psi) := \mathbb{E}(\widetilde{\psi}^2) \geq 0$. The value $S(\psi) := \sqrt{S^2(\psi)}$ is known as the {\bf standard deviation} of $\psi \in L^2(\mu_\varphi)$. 

Now we are able to state a central limit theorem in our setting. The proof presented here is a consequence of the decay of correlations (see Theorem \ref{theorem-DC}). Actually, we follow a similar argument to the one in \cite{zbMATH03337280} (see also \cite{Via97} for a detailed proof). We show existence of a Martingale difference of random variables $(\mathcal{U}^m(\rho))_{m=0}^\infty$ (w.r.t. a suitable nested sequence of sub-algebras of $\mathcal{B}_{X_{A, I}}$), such that $S^2(\psi) = S^2(\rho)$ (with $\rho$ to be defined below). So, our result follows from the central limit theorem for Martingale differences (see for details \cite{MR1700749, zbMATH03200207}). 

In \cite{zbMATH03337280}, the author assumes that $\mu \in \mathcal{M}_1(X_{A, I})$ and $\psi \in L^2(\mu)$ satisfy a suitable condition of convergence for the means of the iterates of $\mathcal{U}$ w.r.t. an orthogonal decomposition of $L^2(\mu)$ (actually, \eqref{cohom} is a particular case of the condition stated in \cite{zbMATH03337280} when $T$ is an endomorphism). Then, he proves that $0 < S(\psi) < \infty$ and the central limit theorem holds true. Moreover, he also concludes that $S^2(\psi) = S^2(\rho)$ in his setting, which appears as a consequence of the decay of correlations in our approach.

The statement of the last theorem of this paper is the following one.

\begin{theorem}
\label{theorem-CL}
Consider a function $\psi \in \mathrm{Lip}(X_{A, I})$. Then, $S(\psi) < \infty$ and the following properties hold true:
\begin{enumerate}[i)] 
\item When $S(\psi) > 0$, we have 
\begin{equation*}
\lim_{m \to \infty} \mu_\varphi\Bigl( x \in X_{A, I} : \frac{1}{\sqrt{m}}\sum_{j=0}^{m-1}\mathcal{U}^j(\widetilde{\psi})(x) \in (a, b) \Bigr) = \frac{1}{S(\psi)\sqrt{2\pi}}\int_a^b e^{-\frac{t^2}{2 S(\psi)}} dt\;;
\end{equation*}
\item $S(\psi) = 0$ if, and only if, $\psi = u - \mathcal{U}(u)$ for some $u \in L^2(\mu_\varphi)$.
\end{enumerate}
\end{theorem}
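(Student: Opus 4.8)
The plan is to follow the martingale-approximation method of Gordin, realizing $\widetilde\psi$ as the sum of a martingale difference and an $L^2$-coboundary, and then invoking the central limit theorem for martingale differences. Throughout I would reduce to the centered observable $\widetilde\psi = \psi - \mathbb{E}(\psi) \in V$, and note that $S(\psi) < \infty$ since $\psi \in \mathrm{Lip}(X_{A, I}) \subset L^2(\mu_\varphi)$ and, by Theorem \ref{theorem-DC}, the correlations $\int_{X_{A, I}} \widetilde\psi\,(\widetilde\psi \circ \sigma^n)\,d\mu_\varphi$ are summable. The first structural fact I would record is that, because $\mathcal{L}^*_\varphi(\mu_\varphi) = \mu_\varphi$ and $\mathcal{L}_\varphi((\phi \circ \sigma)\eta) = \phi\,\mathcal{L}_\varphi(\eta)$, the adjoint of the Koopman operator coincides with the Ruelle operator, $\mathcal{U}^* = \mathcal{L}_\varphi$ on $L^2(\mu_\varphi)$; in particular $\mathcal{U}$ is an isometry with $\mathcal{L}_\varphi \mathcal{U} = \mathrm{Id}$, and the conditional expectations are given by $\mathbb{E}(\,\cdot\,|\,\sigma^{-k}\mathcal{B}_{X_{A, I}}) = \mathcal{U}^k \mathcal{L}^k_\varphi$.

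Next I would build the Gordin decomposition. By Theorem \ref{theorem-DC} the iterates $\mathcal{L}^n_\varphi \widetilde\psi$ decay exponentially, so the series $g := \sum_{n=1}^\infty \mathcal{L}^n_\varphi \widetilde\psi$ converges in $L^2(\mu_\varphi)$. Setting $\rho := \widetilde\psi + g - \mathcal{U}(g)$, a direct computation using $\mathcal{L}_\varphi g = g - \mathcal{L}_\varphi \widetilde\psi$ and $\mathcal{L}_\varphi \mathcal{U}(g) = g$ yields $\mathcal{L}_\varphi \rho = 0$, hence $\mathbb{E}(\rho) = 0$ and the decomposition $\widetilde\psi = \rho + \mathcal{U}(g) - g$. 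I would then verify that $(\mathcal{U}^m \rho)_{m \geq 0}$ is a martingale difference sequence for the decreasing filtration $\mathcal{B}_m := \sigma^{-m}\mathcal{B}_{X_{A, I}}$: combining the conditional-expectation formula above with $\mathcal{L}^m_\varphi \mathcal{U}^m = \mathrm{Id}$ gives $\mathbb{E}(\mathcal{U}^m \rho \,|\, \mathcal{B}_{m+1}) = \mathcal{U}^{m+1}\mathcal{L}_\varphi \rho = 0$.

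To conclude I would telescope the coboundary, writing $\sum_{j=0}^{m-1}\mathcal{U}^j(\widetilde\psi) = \sum_{j=0}^{m-1}\mathcal{U}^j(\rho) + (\mathcal{U}^m(g) - g)$, and observe that $\|\mathcal{U}^m(g) - g\|_2 \leq 2\|g\|_2$, so the rescaled coboundary $m^{-1/2}(\mathcal{U}^m(g) - g)$ tends to $0$ in probability. Since exponential decay of correlations makes $\mu_\varphi$ mixing, hence ergodic, the stationary sequence $(\mathcal{U}^m \rho)$ is an ergodic martingale difference, and the central limit theorem for martingale differences \cite{MR1700749, zbMATH03200207} gives convergence of $m^{-1/2}\sum_{j=0}^{m-1}\mathcal{U}^j(\rho)$ to a centered Gaussian with variance $\mathbb{E}(\rho^2) = S^2(\rho)$. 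A Slutsky-type argument transfers the limit to $m^{-1/2}\sum_{j=0}^{m-1}\mathcal{U}^j(\widetilde\psi)$, proving i) once the identity $S^2(\psi) = S^2(\rho)$ is established; this identity I would obtain from the Green--Kubo expansion $S^2(\rho) = \mathbb{E}(\widetilde\psi^2) + 2\sum_{n \geq 1}\int_{X_{A, I}}\widetilde\psi\,\mathcal{L}^n_\varphi \widetilde\psi\,d\mu_\varphi$, the cross terms being summed precisely by Theorem \ref{theorem-DC}. Property ii) then follows since $S(\psi) = 0$ forces $\rho = 0$, i.e. $\widetilde\psi = \mathcal{U}(g) - g$ is an $L^2$-coboundary (take $u = -g$), while the converse is the already-noted vanishing of the asymptotic variance of a coboundary.

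The main obstacle I anticipate is twofold. First, upgrading the mixed-norm bound of Theorem \ref{theorem-DC} into a genuinely summable control of $\|\mathcal{L}^n_\varphi \widetilde\psi\|_2$, which is what legitimizes both the convergence of the series defining $g$ and the Green--Kubo summation that produces $S^2(\psi) = S^2(\rho)$. Second, the careful justification of the martingale central limit theorem in this setting: the algebras $\mathcal{B}_m$ \emph{decrease} rather than increase, so I must either invoke a reverse-martingale version of the theorem or pass to the natural extension, and I must use ergodicity to identify the almost-sure limit of the normalized conditional variances with the constant $\mathbb{E}(\rho^2)$. By comparison, the coboundary estimate and the Slutsky step are routine.
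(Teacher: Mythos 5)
Your proposal is correct and follows essentially the same route as the paper: the Gordin martingale approximation with respect to the decreasing filtration $\mathcal{B}_m = \sigma^{-m}\mathcal{B}_{X_{A,I}}$, the martingale CLT, and a Green--Kubo identification of the variance. Indeed, since $\mathbb{E}(\,\cdot\,|\,\mathcal{B}_m) = \mathcal{U}^m\mathcal{L}^m_\varphi$ and $\mathcal{L}^m_\varphi\mathcal{U}^m = \mathrm{Id}$, your $g$ equals $-\zeta$ and your $\rho = \widetilde{\psi} + g - \mathcal{U}(g)$ coincides with the paper's $\rho = \widetilde{\psi} - \zeta + \mathcal{U}(\zeta)$, so the two decompositions are the same object in different notation; the obstacles you flag (summability of $\|\mathcal{L}^n_\varphi\widetilde{\psi}\|_2$ and the reverse-martingale form of the CLT) are real but are exactly the points the paper itself leans on without further elaboration.
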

\begin{proof}
For $m \in \mathbb{N} \cup \{0\}$, consider the algebra $\mathcal{B}_m := \sigma^{-m}(\mathcal{B}_{X_{A, I}})$. It is easy to check that $\mathcal{B}_{X_{A, I}} \supset \mathcal{B}_1 \supset \cdots \supset \mathcal{B}_m \supset \cdots $. So, defining $\mathcal{B}_\infty := \bigcap_{m=0}^\infty \mathcal{B}_m$, we obtain $L^2(\mu_\varphi) \supset L^2(\mathcal{B}_1) \supset \cdots \supset L^2(\mathcal{B}_m) \supset \cdots \supset L^2(\mathcal{B}_\infty)$. Furthermore, we have $\mathcal{U}(L^2(\mathcal{B}_m)) = L^2(\mathcal{B}_{m+1})$, $\mathcal{U}^*(L^2(\mathcal{B}_{m+1})) = L^2(\mathcal{B}_m)$ and, by the Riesz representation theorem, we obtain that
\begin{equation*}
\| \mathbb{E}(\widetilde{\psi} | \mathcal{B}_m ) \|_2
\leq \sup\Bigl\{ \mathbb{E}(\mathcal{U}^m(\phi_m) \widetilde{\psi}) :\; \phi_m \in \mathrm{Lip}(X_{A, I}),\, \|\phi_m\|_1 \leq 1 \Bigr\}
\leq C_{\phi_m, \psi} \Lambda^m \;,
\end{equation*}
where the last one of the inequalities is given by Theorem \ref{theorem-DC}, i.e., $\Lambda \in (0, 1)$ and $C_{\phi_m, \psi} = \|\phi_m\|_1\|\widetilde{\psi}\|_\infty \leq \|\widetilde{\psi}\|_\infty$. By the above
\begin{equation}
\label{zeta-2}
\sum_{m = 1}^\infty \|\mathbb{E}(\widetilde{\psi} | \mathcal{B}_m )\|_2^2 \leq \sum_{m = 1}^\infty C_{\phi_m, \psi} \Lambda^m \leq \frac{\|\widetilde{\psi}\|_\infty}{1 - \Lambda} \;.
\end{equation} 

Besides that, since $\mathbb{E}(\widetilde{\psi} | \mathcal{B}_m)$ is the projection of $\widetilde{\psi}$ on $L^2(\mathcal{B}_m)$, it follows that $\|\mathbb{E}(\widetilde{\psi} | \mathcal{B}_m) - \mathbb{E}(\widetilde{\psi} | \mathcal{B}_{m+1})\|_2 
\leq \|\mathbb{E}(\widetilde{\psi} | \mathcal{B}_m)\|_2$. So, by \eqref{zeta-2}, it follows that
\begin{equation}
\label{rho-2}
\sum_{m = 1}^\infty \|\mathbb{E}(\widetilde{\psi} | \mathcal{B}_m) - \mathbb{E}(\widetilde{\psi} | \mathcal{B}_{m+1})\|_2^2 \leq \frac{\|\widetilde{\psi}\|_\infty}{1 - \Lambda} \;.
\end{equation}

Define the maps
\begin{equation}
\label{rho}
\zeta := -\sum_{m=1}^\infty (\mathcal{U}^*)^m(\mathbb{E}(\widetilde{\psi} | \mathcal{B}_m))\;, \ \ \rho := \sum_{m=0}^\infty (\mathcal{U}^*)^m(\mathbb{E}(\widetilde{\psi} | \mathcal{B}_m) - \mathbb{E}(\widetilde{\psi} | \mathcal{B}_{m+1})) \;.
\end{equation}

By \eqref{zeta-2}, it follows that $\zeta \in L^2(\mu_\varphi)$ and, by \eqref{rho-2}, we obtain that $\rho \in L^2(\mu_\varphi)$. So, reordering the series in \eqref{rho}, the expression $\rho = \widetilde{\psi} - \zeta + \mathcal{U}(\zeta)$ holds true. In particular, the above implies that
\begin{equation}
\label{cohom}
\frac{1}{\sqrt{m}} \sum_{j=0}^{m-1}\mathcal{U}^j(\widetilde{\psi}) = \frac{1}{\sqrt{m}} \sum_{j=0}^{m-1}\mathcal{U}^j(\rho) + \frac{1}{\sqrt{m}}(\zeta - \mathcal{U}^m(\zeta)) \;.
\end{equation}

Besides that, since $\zeta \in L^2(\mu_\varphi)$, for any $\delta > 0$ we have
\begin{equation}
\label{CM}
\lim_{m \to \infty} \mu_\varphi\Bigl( x \in X_{A, I} : \frac{1}{\sqrt{m}}| \zeta - \mathcal{U}^m(\zeta) | > \delta \Bigr) = 0 \;.
\end{equation}

On the other hand, since $(\mathcal{U}^*)^m(\mathbb{E}(\widetilde{\psi} | \mathcal{B}_m) - \mathbb{E}(\widetilde{\psi} | \mathcal{B}_{m+1})) \in L^2(\mathcal{B}_1)^{\bot}$ for each $m \in \mathbb{N} \cup \{0\}$, it follows that $\rho \in L^2(\mathcal{B}_1)^{\bot}$. Therefore, $\mathbb{E}(\rho | \mathcal{B}_1) = 0$ and
\begin{equation}
\label{mart}
\mathbb{E}(\mathcal{U}^m(\rho) | \mathcal{B}_{m+1}) = \mathcal{U}^m(\mathbb{E}(\rho | \mathcal{B}_1)) = 0 \;.
\end{equation} 

The above implies that the sequence of random variables $(\mathcal{U}^m(\rho))_{m=0}^\infty$ is a Martingale difference for the nested sequence of algebras $(\mathcal{B}_m)_{m=0}^\infty$. So, the central limit theorem for Martingale differences holds true (see \cite{MR1700749, zbMATH03200207} for details). That is, when we have $0 < S(\rho) < \infty$, it follows that
\begin{equation*}
\lim_{m \to \infty} \mu_\varphi\Bigl( x \in X_{A, I} : \frac{1}{\sqrt{m}}\sum_{j=0}^{m-1}\mathcal{U}^j(\rho)(x) \in (a, b) \Bigr) = \frac{1}{S(\rho)\sqrt{2\pi}}\int_a^b e^{-\frac{t^2}{2 S(\rho)}} dt \;.
\end{equation*} 

Furthermore, since $\mathcal{U}^m(\rho) \in L^2(\mathcal{B}_m)$, it follows that $\mathcal{U}^k(\rho) \in L^2(\mathcal{B}_{m+1})$ for any $k > m$, thus, by \eqref{mart}, we obtain that $\mathbb{E}(\mathcal{U}^k(\rho)\mathcal{U}^m(\rho)) = 0$. The above, joint with \eqref{cohom}, implies that $\|\rho\|_2^2 = \mathbb{E}(\widetilde{\psi}^2) + 2\lim_{n \to \infty} \Bigl(\sum_{j=1}^{n-1} \frac{j}{n}\mathbb{E}(\widetilde{\psi}\mathcal{U}^j(\widetilde{\psi})) \Bigr)$.

Since $\mathbb{E}(\widetilde{\psi}\mathcal{U}^j(\widetilde{\psi})) \leq \|\mathbb{E}(\widetilde{\psi} | \mathcal{B}_j)\|_2\|\widetilde{\psi}\|_2$ for each $j \in \mathbb{N} \cup \{0\}$, it follows that any $k \in \mathbb{N}$ and $n \geq k$ satisfy
\begin{equation*}
\Bigl\| \sum_{j=1}^{n-1} \frac{j}{n}\mathbb{E}(\widetilde{\psi}\mathcal{U}^j(\widetilde{\psi})) \Bigr\|_2
\leq \|\widetilde{\psi}\|_2\Bigl(\sum_{j=1}^{k-1} \frac{j}{n}\|\mathbb{E}(\widetilde{\psi} | \mathcal{B}_j)\|_2 + \sum_{j=k}^{n-1} \|\mathbb{E}(\widetilde{\psi} | \mathcal{B}_j)\|_2 \Bigr) \;.
\end{equation*}

Fix $\epsilon > 0$, by \eqref{zeta-2}, there is $k = k(\epsilon)$, such that, $\sum_{j = k}^\infty \|\mathbb{E}(\widetilde{\psi} | \mathcal{B}_j)\|_2 < \epsilon$ and $n = n(\epsilon, k) > k$, such that, $\epsilon > \frac{k-1}{n} > 0$. In particular
\begin{equation*}
\Bigl\| \sum_{j=1}^{n-1} \frac{j}{m}\mathbb{E}(\widetilde{\psi}\mathcal{U}^j(\widetilde{\psi})) \Bigr\|_2
\leq \|\widetilde{\psi}\|_2\Bigl(\epsilon \sum_{j=1}^{k-1} \|\mathbb{E}(\widetilde{\psi} | \mathcal{B}_j)\|_2 + \epsilon \Bigr) 
< \epsilon \|\widetilde{\psi}\|_2 \Bigl(\frac{\|\widetilde{\psi}\|_\infty}{1 - \Lambda} + 1\Bigr)\;.
\end{equation*}

Then, taking $\epsilon \to 0$, we obtain that $\lim_{n \to \infty} \bigl\| \sum_{j=1}^{n-1} \frac{j}{n}\mathbb{E}(\widetilde{\psi}\mathcal{U}^j(\widetilde{\psi})) \bigr\|_2 = 0$ which implies $\|\rho\|_2^2 = \mathbb{E}(\widetilde{\psi}^2) = S^2(\psi)$. But, $\mathbb{E}(\rho) = \mathbb{E}(\widetilde{\psi} - \zeta + \mathcal{U}(\zeta)) = \mathbb{E}(\widetilde{\psi}) = 0$, so, we have $S^2(\rho) = \mathbb{E}(\rho^2) = \|\rho\|_2^2 = S^2(\psi)$. 

In particular, when $S(\psi) > 0$, by central limit theorem for Martingale differences, it follows that
\begin{equation*}
\lim_{m \to \infty} \mu_\varphi\Bigl( x \in X_{A, I} : \frac{1}{\sqrt{m}}\sum_{j=0}^{m-1}\mathcal{U}^j(\rho)(x) \in (a, b) \Bigr) = \frac{1}{S(\psi)\sqrt{2\pi}}\int_a^b e^{-\frac{t^2}{2 S(\psi)}} dt \;.
\end{equation*}  

- Assume that $S(\psi) > 0$ and for any pair $a, b \in \mathbb{R}$, with $a < b$, define $\Phi(a, b) := \frac{1}{S(\psi)\sqrt{2\pi}}\int_a^b e^{-\frac{t^2}{2 S(\psi)}} dt$. Observe that $(a, b) \mapsto \Phi(a, b)$ is continuous. Then, following a similar procedure that the one in \cite{Via97}, by \eqref{cohom} and \eqref{CM}, we obtain that the limit at item $i)$ of the theorem holds true. 

- Assume that $S(\psi) = 0$. Then, we have $\rho = 0$ which implies $\widetilde{\psi} = \zeta - \mathcal{U}(\zeta)$. Conversely, when $\widetilde{\psi} = u - \mathcal{U}(u)$, it follows that $\zeta = u$ and $\rho = 0$, which implies $S(\psi) = 0$. The above proves item $ii)$ of the theorem.
\end{proof}

\section*{Acknowledgments} The author would to thank to the University of Porto for the hospitality during the development of this paper and to the Foundation for Science and Technology (FCT) Project UIDP/00144/2020 for the financial support.

\end{document}